\documentclass{amsart}
\usepackage{amsmath,amssymb,amsthm,fullpage,verbatim}
\usepackage{tikz}
\setlength{\parindent}{0cm}
\setlength{\parskip}{0.1in}
\newtheorem{theorem}{Theorem}
\newtheorem{lemma}{Lemma}

\begin{document}
\title{More on Periodicity and Duality associated with Jordan partitions}
\author{Michael~J.~J.~Barry}
\address{15 River Street Unit 205\\
Boston, MA 02108}
\email{mbarry@allegheny.edu}

\subjclass{20C20}

\begin{abstract}
Let $J_r$ denote a full $r \times r$ Jordan block matrix with eigenvalue $1$ over a field $F$ of characteristic $p$.  For positive integers $r$ and $s$ with $r \leq s$, the Jordan canonical form of the $r s \times r s$ matrix $J_{r} \otimes J_{s}$ has the form $J_{\lambda_1} \oplus J_{\lambda_2} \oplus \dots \oplus J_{\lambda_{r}}$ where $\lambda_1 \geq \lambda_2 \geq \dots \geq \lambda_{r}>0$.  This decomposition determines a partition $\lambda(r,s,p)=(\lambda_1,\lambda_2,\dots, \lambda_{r})$ of $r s$, known as the \textbf{Jordan partition}, but the values of the parts depend on $r$, $s$, and $p$.  Write
\[(\lambda_1,\lambda_2,\dots, \lambda_{r})=(\overbrace{\mu_1,\dots,\mu_1}^{m_1},\overbrace{\mu_2,\dots,\mu_2}^{m_2},\dots, \overbrace{\mu_k,\dots,\mu_k}^{m_k})
=(m_1 \cdot \mu_1, \dots,m_k \cdot \mu_k),\]
where $\mu_1>\mu_2>\dots>\mu_k>0$, and denote the composition $(m_1,\dots,m_k)$ of $r$ by $c(r,s,p)$.
A recent result of Glasby, Praeger, and Xia in \cite{GPX} implies that if $r \leq p^\beta$, $c(r,s,p)$ is periodic in the second variable $s$ with period length $p^\beta$ and exhibits a reflection property within that period.  We determine the least period length and we exhibit new partial subperiodic and partial subreflective behavior.
\end{abstract}

\maketitle

\section{Introduction}\label{Intro}
The partition $\lambda(r,s,p)$ is also related to the modular representation of a cyclic group $G$ of order $p^\alpha$ where $s \leq p^\alpha$.  The group ring $F G$, where $F$ is a field of characteristic $p$, has exactly $p^\alpha$ pairwise nonisomorphic indecomposable $F G$-modules.  If $V_1$,\dots,$V_{p^\alpha}$ denote $p^\alpha$ pairwise nonisomorphic indecomposable $F G$-modules, then they can be labeled such that $\dim V_i=i$, and then if $r \leq s \leq p^\alpha$, $V_r \otimes V_s=V_{\lambda_1} \oplus \dots \oplus V_{\lambda_r}$ where $\lambda(r,s,p)=(\lambda_1,\lambda_2,\dots, \lambda_{r})$ .

If the Jordan partition
\[\lambda(r,s,p)=(\overbrace{\mu_1,\dots,\mu_1}^{m_1},\overbrace{\mu_2,\dots,\mu_2}^{m_2},\dots, \overbrace{\mu_k,\dots,\mu_k}^{m_k})
=(m_1 \cdot \mu_1, \dots,m_k \cdot \mu_k),\]
where $\mu_1>\mu_2>\dots>\mu_k>0$,
we denote the composition $(m_1,\dots,m_k)$ of $r$ by $c(r,s,p)$.  Note that $\lambda(r,s,p)$ is recoverable from $c(r,s,p)$ and $s$ by~\cite[Proposition 2]{B2015} and~\cite[Theorem 5]{GPX2}.

We record two properties of $c(r,s,p)$ which follow from a result of Glasby, Praeger, and Xia~\cite[Theorem 4]{GPX}.
\begin{theorem}[GPX]\label{GPX}
Suppose that $r \leq p^n$.  Then
\begin{enumerate}
\item $c(r,s,p)=c(r,s+p^n,p)$ for every integer $s \geq r$, \qquad and
\item $c(r,p^n+i,p)=rev(c(r,p^n+(p^n-i),p))$ for every $i \in [0,p^n]$ where $rev((m_1,\dots,m_k))=(m_k,\dots,m_1)$.
\end{enumerate}
\end{theorem}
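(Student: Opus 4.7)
My plan is to derive Theorem~\ref{GPX} as a direct corollary of \cite[Theorem~4]{GPX}, which gives an explicit description of $\lambda(r,s,p)$ under the hypothesis $r\leq p^n$. The work lies in translating information about $\lambda$ into information about the composition $c$ of multiplicities.

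For part~(1), I anticipate that in the range $r\leq p^n$, $s\geq r$, the content of \cite[Theorem~4]{GPX} amounts to the statement that $\lambda(r,s+p^n,p)$ is obtained from $\lambda(r,s,p)$ by adding $p^n$ to every part (a fact consistent with the dimension count $r(s+p^n)=rs+rp^n$). My first step is to verify this reading. Adding the same integer to every part preserves both the decreasing order and the pattern of equalities among parts, so the distinct values $\mu_i$ all shift uniformly by $p^n$ while the multiplicities $m_1,\dots,m_k$ and their order are unchanged, yielding $c(r,s+p^n,p)=c(r,s,p)$.

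For part~(2), I expect \cite[Theorem~4]{GPX} to provide, in addition, a reflection symmetry of $\lambda(r,\cdot,p)$ about $s=p^n$: for $i\in[0,p^n]$, an order-reversing bijection between the distinct part-values of $\lambda(r,p^n+i,p)$ and those of $\lambda(r,p^n+(p^n-i),p)$ that preserves the multiplicity of each paired value. Reading off multiplicities in decreasing order of value then gives the two compositions as reverses of one another, which is precisely $c(r,p^n+i,p)=rev(c(r,p^n+(p^n-i),p))$.

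The main obstacle is confirming that \cite[Theorem~4]{GPX} actually supplies such a bijection on distinct part-values, so that no multiplicity block merges or splits under the correspondence. Once this is verified for the shift map in part~(1) and for the reflection map in part~(2), both assertions of Theorem~\ref{GPX} follow with no further calculation.
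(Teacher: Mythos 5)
Your proposal is correct and is essentially the paper's own approach: the paper offers no independent argument for Theorem~GPX, recording both parts as direct consequences of \cite[Theorem 4]{GPX}, and your translation is exactly the intended one, since periodicity there says every part of $\lambda(r,s,p)$ shifts by $p^n$ (so the multiplicity pattern, hence $c$, is unchanged), while the duality pairs $\lambda_j$ of one partition with $\lambda_{r+1-j}$ of the other so that each pair sums to a fixed constant. In particular, the ``main obstacle'' you flag dissolves at once: because the duality is a complementation $\mu \mapsto (\text{constant}) - \mu$ on part values, it is automatically an order-reversing, multiplicity-preserving bijection on distinct parts (and both arguments in part (2) are at least $p^n \geq r$, so both partitions have exactly $r$ positive parts and no block can merge or split), giving $c(r,p^n+i,p)=rev(c(r,p^n+(p^n-i),p))$ as you describe.
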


The first result of Theorem~GPX implies that $c(r,s,p)$ is periodic in the variable $s$ with a period of length $p^n$.  But could a period of length a proper divisor of $p^n$ exist?  Our first result rules this out.
 \begin{theorem}\label{Theorem1}
If $p$ is a prime number and $p^{n-1}<r \leq p^n$, then $c(r,s,p)$ is periodic in $s$ with smallest period of length $p^n$.
\end{theorem}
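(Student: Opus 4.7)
Theorem~GPX provides $p^n$ as a period of $c(r,\cdot,p)$, and the set of periods of any periodic sequence is a submonoid of $\mathbb{Z}_{>0}$ whose minimum $d$ divides every period; in particular $d\mid p^n$, so $d=p^m$ for some $0\le m\le n$. If $m\le n-1$ then $p^m$ divides $p^{n-1}$, and $p^{n-1}$ would itself be a period. It therefore suffices to exhibit an $s$ with $c(r,s,p)\ne c(r,s+p^{n-1},p)$; I take $s=p^n$.

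One anchor is easy. With $G$ cyclic of order $p^n$, the module $V_{p^n}$ is the regular module $FG$, so $V_r\otimes V_{p^n}$ is $FG$-free of rank $r$ and splits as $r$ copies of $V_{p^n}$; hence $\lambda(r,p^n,p)=(p^n,\dots,p^n)$ and $c(r,p^n,p)=(r)$.

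The other anchor is the substance of the argument: I must show that $c(r,p^n+p^{n-1},p)\ne(r)$, equivalently that the largest part of $\lambda(r,p^n+p^{n-1},p)$ exceeds $s:=p^n+p^{n-1}$. Identify $V_r\otimes V_s$ with the ring $R:=F[y,z]/(y^r,z^s)$, on which $J_r\otimes J_s$ acts as multiplication by $(1+y)(1+z)$; then $J_r\otimes J_s-I$ is multiplication by $\omega:=y+z+yz$, so the largest Jordan block of $J_r\otimes J_s$ equals the nilpotency index of $\omega$ on $R$. Frobenius in characteristic $p$ gives $\omega^{p^k}=y^{p^k}+z^{p^k}+y^{p^k}z^{p^k}$, whence
\[
\omega^{s}=\omega^{p^n}\omega^{p^{n-1}}=z^{p^n}\bigl(y^{p^{n-1}}+z^{p^{n-1}}+y^{p^{n-1}}z^{p^{n-1}}\bigr)=y^{p^{n-1}}z^{p^n},
\]
where $y^{p^n}=0$ in $R$ (because $r\le p^n$) collapses the first factor to $z^{p^n}$, and $z^{s}=0$ kills the last two terms of the expanded product. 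Since $p^{n-1}<r$ and $p^n<s$, the surviving monomial $y^{p^{n-1}}z^{p^n}$ is a nonzero basis element of $R$. Hence $\omega^{s}\ne0$, so the largest part of $\lambda(r,s,p)$ strictly exceeds $s$, and $c(r,s,p)\ne(r)$ as required.

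Putting the pieces together, $c(r,p^n,p)=(r)\ne c(r,p^n+p^{n-1},p)$ contradicts the hypothetical period $p^{n-1}$, so the smallest period must be $p^n$. The single nontrivial step is the displayed Frobenius computation; everything else is a short reduction or a dimensional identification, and I expect that step to be the crux of any written proof.
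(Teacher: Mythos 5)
Your proof is correct, and it takes a genuinely different route from the paper's. The paper stays inside the recursive algorithm of Section~2: for $s$ ranging over suitable intervals it applies Case~1 (and, when $r>(p-1)p^{n-1}$, Case~2 for odd $p$ or Case~1 again for $p=2$) to read off the leading entry $m_1=r+s-p^n$, resp.\ $m_1=r+s-2p^n$, of $c(r,s,p)$, and thereby exhibits $p^{n-1}+1$ distinct values of $m_1$ sweeping the interval $[r-p^{n-1},r]$; since any period shorter than $p^n$ would divide $p^{n-1}$ and so allow at most $p^{n-1}$ distinct compositions, the least period is $p^n$. You instead refute $p^{n-1}$-periodicity with a single explicit witness pair: $c(r,p^n,p)=(r)$ because $V_r\otimes V_{p^n}$ is free over $FG$, while $c(r,p^n+p^{n-1},p)\neq(r)$ because in $F[y,z]/(y^r,z^s)$ the Frobenius identity collapses $\omega^{p^n+p^{n-1}}$ to $y^{p^{n-1}}z^{p^n}$, which is nonzero precisely because $p^{n-1}<r$ and $p^n<s$, so the largest part of $\lambda(r,s,p)$ exceeds the average $s$. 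All the supporting steps check out: for a sequence defined on the half-line $s\geq r$, periods are closed under positive differences, so the minimal period divides $p^n$, and multiples of periods are periods, so refuting $p^{n-1}$ suffices; the identification of $\lambda_1$ with the nilpotency index of $\omega=y+z+yz$ is standard; and since the $r$ parts sum to $rs$, $c(r,s,p)=(r)$ forces $\lambda_1=s$. Your argument is more elementary and self-contained---it does not depend on the correctness of the algorithm, avoids the paper's case split on the size of $r$ and on $p=2$, and uses Theorem~GPX only to know that $p^n$ \emph{is} a period---whereas the paper's argument yields strictly more information (at least $p^{n-1}+1$ distinct compositions within a single period) and keeps the proof uniform with the algorithmic machinery that drives Theorems~\ref{Theorem2} and~\ref{Theorem3}.
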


In our next result, we record partial subperiodic behavior.
\begin{theorem}\label{Theorem2}
Suppose that $p^{n-1}<r \leq \frac{p-1}{2} p^{n-1}$.  Then 
\begin{enumerate}
\item $c(r,s,p)=c(r,s+p^{n-1},p)$ for every integer $s$ satisfying $r \leq s \leq (p-\lceil r/p^{n-1} \rceil )p^{n-1}$, \qquad and
\item $c(r,(p-\lceil r/p^{n-1} \rceil)p^{n-1}+1,p) \neq c(r,(p-\lceil r/p^{n-1} \rceil+1)p^{n-1}+1,p)$, that is, the subperiodic behavior ends at $s=p-\lceil r/p^{n-1} \rceil)p^{n-1}+1$.
\end{enumerate}
\end{theorem}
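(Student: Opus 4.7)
Write $r = q p^{n-1} + t$ with $0 \leq t < p^{n-1}$; the hypothesis $p^{n-1} < r \leq \frac{p-1}{2}p^{n-1}$ gives $1 \leq q \leq \frac{p-1}{2}$, and $q^{*} := \lceil r/p^{n-1}\rceil$ equals $q$ when $t = 0$ and $q+1$ when $t > 0$. Write any $s$ of interest as $s = u p^{n-1} + v$ with $0 \leq v < p^{n-1}$; then $s \leq (p - q^{*})p^{n-1}$ corresponds to $u \leq p - q^{*}$ when $v = 0$ and $u \leq p - q^{*} - 1$ when $v > 0$. The strategy is to produce an explicit ``bulk plus boundary'' description of $\lambda(r,s,p)$ on this range and then read off the composition $c(r,s,p)$.

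For \textbf{part (1)}, my plan is to use the recursive/combinatorial machinery of \cite{B2015, GPX, GPX2} to express $\lambda(r, s, p)$, in the stated range, as a union of (i) a ``bulk'' of Jordan blocks of size exactly $p^{n-1}$, whose number depends linearly on $u$, and (ii) a ``boundary'' of at most $2q^{*}$ blocks whose sizes are determined by $t$, $v$, $q$, and $p$ alone. Incrementing $u$ by one inside the allowed range adds one more bulk block of the same size $p^{n-1}$: this increases the multiplicity of an existing $\mu_j$ by $1$ but introduces no new distinct part and removes none, so the composition $(m_1,\dots,m_k) = c(r,s,p)$ is unchanged. Thus $c(r,s,p) = c(r, s+p^{n-1}, p)$ throughout the stated range.

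For \textbf{part (2)}, at $s_0 = (p - q^{*})p^{n-1}+1$ we have $u_0 = p - q^{*}$ and $v_0 = 1$, and the shifted value $s_0+p^{n-1}$ has $u = p - q^{*}+1$, so that $u + q^{*} > p$. This is precisely the threshold at which the bulk formula breaks down: a would-be bulk block of size $p^{n-1}$ collides with the top of the period $p^n$ and is forced either to merge with a boundary block or to produce a new distinct part, changing the number $k$ of distinct parts or the multiplicity pattern. I would certify the inequality either by direct computation at the boundary or, more economically, by applying the reflection of Theorem~GPX~(2) to reduce the comparison of $c(r, s_0, p)$ and $c(r, s_0 + p^{n-1}, p)$ to two compositions in the subperiodic range, where the formula of part~(1) makes the discrepancy visible.

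The \textbf{main obstacle} is producing the bulk-plus-boundary formula with enough precision to see that the boundary contribution depends only on $v$ (and not on $u$) exactly when $s \leq (p - q^{*})p^{n-1}$, and to identify the specific coordinate of $c(r,s,p)$ that changes when one crosses the threshold. This is delicate base-$p$ bookkeeping: the hypothesis $q \leq \frac{p-1}{2}$ is exactly what is needed to keep a genuine ``central zone'' in the period of length $p^n$ between the two halves linked by the GPX reflection, and the constant $\frac{p-1}{2}$ appears because once $q > \frac{p-1}{2}$ the central zone becomes empty and the claimed subperiodic range is vacuous.
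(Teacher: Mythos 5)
Your plan for part (1) rests on a mechanism that cannot be right. The partition $\lambda(r,s,p)$ has exactly $r$ parts for every $s \geq r$ (the tensor product $J_r \otimes J_s$ decomposes into exactly $r$ Jordan blocks), so $m_1 + \cdots + m_k = r$ is constant in $s$: passing from $s$ to $s + p^{n-1}$ cannot ``add one more bulk block.'' Worse, even granting that picture, the conclusion you draw from it is backwards: the composition $c(r,s,p) = (m_1,\dots,m_k)$ \emph{is} the list of multiplicities, so ``increasing the multiplicity of an existing $\mu_j$ by $1$'' would change $c(r,s,p)$, not leave it invariant. What actually happens in the subperiodic range --- visible in the paper's own example, where $c(50,49i+j,7)=(1,j-1,1,48-j,1)$ independently of $i$ --- is the opposite of your mechanism: the multiplicities are frozen and the distinct part sizes $\mu_j$ shift upward as $s$ increases by $p^{n-1}$. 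So the bulk-plus-boundary formula, which you yourself flag as the main obstacle and never construct, is not merely unproved; its intended shape is inconsistent with the basic fact that $\lambda(r,s,p)$ always has $r$ parts.

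The paper closes this gap by a different route worth noting: a minimal-counterexample induction on $r$. Writing $r = ap^{n-1}+c$ and $s = bp^{n-1}+d$, it feeds both $c(r,s,p)$ and $c(r,s+p^{n-1},p)$ through the six-case recursion of Section~2 (Cases 1, 2, 3, 6 arise, according to $d=0$ versus $d>0$, the size of $c+d$ relative to $p^{n-1}$, and whether $b$ sits at the boundary value); in every configuration both sides reduce to a common prefix concatenated with $c(r',\cdot,p)$ for some $r' < r$, with the two second arguments again differing by $p^{n-1}$, so minimality of $r$ (or Theorem~GPX when $r' \leq p^{n-1}$) finishes. For part (2) no threshold heuristic is needed: the paper reads off the first components directly, getting $1$ versus $c+1$ via Cases 3 and 1 when $0<c<p^{n-1}$, and $1$ versus $p^{n-1}+1$ via Case 1 twice when $c=0$. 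Your suggested reduction of part (2) through the GPX reflection could in principle be made to work, but as written it leans on the same unestablished and incorrectly shaped bulk formula, so the proposal has a genuine gap rather than a complete alternative argument.
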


In fact Glasby, Praeger, and Xia  prove a more general result than the second result of Theorem~GPX that they called \textbf{duality}, but we can view this narrower result above as a reflection inside the period of length $p^n$.  It turns out that there is also a reflection inside the partial subperiod of length $p^{n-1}$ which we describe next.

\begin{theorem}\label{Theorem3}
Suppose that $p^{n-1}<r \leq \frac{p-1}{2} p^{n-1}$.  If $s=b p^{n-1}+d$, then $c(r,s,p)=rev(c(r,b p^{n-1}+(p^{n-1}-d),p))$ whenever $r \leq s=b p^{n-1}+d, b p^{n-1}+p^{n-1}-d \leq (p-\lceil r/p^{n-1} \rceil )p^{n-1}$.
\end{theorem}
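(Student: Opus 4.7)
The plan is to combine both parts of Theorem~GPX (the $p^n$-periodicity and the reflection about $p^n$) with the subperiodicity of Theorem~\ref{Theorem2}(1). The strategy is to apply the GPX reflection to $s$ after a full-period shift, which produces $c(r,p^n-s,p)$ on the right; then Theorem~\ref{Theorem2}(1) is invoked to identify this with $c(r,s',p)$, where $s' = bp^{n-1}+(p^{n-1}-d)$.

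To execute the first half, set $a = \lceil r/p^{n-1}\rceil$ and invoke Theorem~GPX(2) with index $i = s$, which is legitimate since $s \leq (p-a)p^{n-1} < p^n$. This yields $c(r, p^n + s, p) = rev(c(r, 2p^n - s, p))$. Applying Theorem~GPX(1) on both sides, using $s \geq r$ on the left and $p^n - s \geq r$ on the right (which follows from $s \leq (p-a)p^{n-1}$ together with $r \leq a p^{n-1}$, so $s + r \leq p^n$), one obtains
\[ c(r, s, p) = rev(c(r, p^n - s, p)). \]
Observe that $p^n - s = (p - b - 1)p^{n-1} + (p^{n-1} - d)$, so $p^n - s$ and $s'$ are congruent modulo $p^{n-1}$ and differ by $(p - 2b - 1)p^{n-1}$.

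The remaining step is to prove $c(r, p^n - s, p) = c(r, s', p)$ by chaining the $p^{n-1}$-shifts of Theorem~\ref{Theorem2}(1). That identity allows a shift from any $t \in [r, (p-a)p^{n-1}]$ to $t + p^{n-1}$, so by iteration $c(r, t_1, p) = c(r, t_2, p)$ whenever $t_1 \equiv t_2 \pmod{p^{n-1}}$ and both values lie in $[r, (p-a+1)p^{n-1}]$. The hypothesis places $s'$ in the subperiodic range; the key bound to check is $p^n - s \leq (p-a+1)p^{n-1}$, equivalent to $(a - b - 1)p^{n-1} \leq d$. This holds automatically, because $s \geq r > (a-1)p^{n-1}$ forces $b \geq a - 1$. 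I expect the main obstacle to be the range bookkeeping at this final step: one must verify that every intermediate value along the $p^{n-1}$-chain from $\min(s', p^n - s)$ to $\max(s', p^n - s)$ lies in $[r, (p-a)p^{n-1}]$ so that Theorem~\ref{Theorem2}(1) applies termwise. This splits naturally into the cases $b \leq (p-1)/2$ (chain ascends) and $b > (p-1)/2$ (chain descends), but in both cases the extremal checks are delivered by the two bounds on $s'$ and $p^n - s$ already established.
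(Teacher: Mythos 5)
Your proposal is correct and takes essentially the same route as the paper: both proofs factor the identity into a reflection about $p^n$ (supplying the $rev$) followed by a chain of $p^{n-1}$-shifts justified by Theorem~\ref{Theorem2}(1), with the same endpoint bookkeeping in the window $[r,(p-\lceil r/p^{n-1}\rceil+1)p^{n-1}]$. The only cosmetic difference is that you implement the reflection by applying Theorem~\ref{GPX}(2) directly to $s$ (rather than, as the paper does, applying Case~5 of the algorithm to $b p^{n-1}+(p^{n-1}-d)$ after a $p^n$-shift), which incidentally covers the case $d=0$ uniformly where the paper dispatches it separately.
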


Theorems~\ref{Theorem2} and \ref{Theorem3} only have content when the prime $p \geq 5$ since if $p=2$ or $p=3$, $p^{n-1} \not < \frac{p-1}{2} p^{n-1}$.

We illustrate Theorems~\ref{Theorem2} and \ref{Theorem3} with an example.  Let $p=7$ and $r=50$.   Then $n=3$ and $(p-\lceil r/p^{n-1} \rceil )p^{n-1}=(7-2)49=245$.
For an integer $i$ satisfying $1 \leq i \leq 5$, 
\[c(50,49 i+j,7)=
\begin{cases}
(1,1,47,1), & \text{if $j=1$;}\\
(1,j-1,1,48-j,1), & \text{if $2 \leq j \leq 47$;}\\
(1,47,1,1), & \text{if $j=48$;}\\
(1,48,1), & \text{if $j=49$.}
\end{cases}
\]
Thus $c(50,s,7)=c(50,s+49,7)$ for every integer $s \in [50,245]$ and $c(50,49 i+j,7)=c(50,49 i+49-j)$ for every $(i,j) \in \{(k,\ell) \mid 1 \leq k \leq 5, 0 \leq j \leq 49\}\setminus \{(1,0)\}$.  Also $c(50,246,7) =(1,1,47,1)\neq (2,47,1)=c(50,295,7)$.  We see that the most number of parts of the composition $c(50,s,7)$ is smallish at $5$, presumably because $50$ is close to $49=7^2$.  The most number of parts in $c(55,s,7)$ is at least $11$.

In Section~\ref{BarryAl2011}, we  describe the algorithm for calculating $c(r,s,p)$ given by the present author in~\cite{B2015} which we will use to prove our results in Section~\ref{Proofs}.  The paper ends with two observations.

\section{ An Algorithm for Computing $c(r,s,p)$}\label{BarryAl2011}
The present author gave a recursive algorithm in~\cite{B2011_0} for computing $V_m \otimes V_n$ as a sum of indecomposables and rephrased it as an algorithm for computing $c(r,s,p)$ in~\cite{B2015}.  The following notation is used in this algorithm:
if $\mathbf{u}=(u_1,\dots,u_t)$ and $\mathbf{v}=(v_1,\dots,v_f)$ are sequences, then $\mathbf{u}\oplus \mathbf{v}$ denotes $(u_1,\dots,u_t,v_1,\dots,v_f)$, the concatenation of $(u_1,\dots,u_t)$ and $(v_1,\dots,v_f)$.

Assume $0<r \leq s$.  Let $k$ be the unique nonnegative integer $k$ such that $p^k \leq s<p^{k+1}$, so $k=\lfloor \log_p(s) \rfloor$.  Write $s=b p^k+d$ with $1 \leq b <p$ and $0 \leq d <p^k$.  Write $r=a p^k+c$ where $0 \leq a <p$ and $0 \leq c <p^k$. The following six cases are exhaustive and mutually exclusive.

\begin{enumerate}
\item[Case 1] If ($r+s>p^{k+1}$): 

Then $c(r,s,p)=(r+s-p^{k+1}) \oplus c(p^{k+1}-s,p^{k+1}-r,p)$.
\item[Case 2] else if $(r+s  \leq p^{k+1}) \wedge (c+d >p^k)$: \label{Step2}

Here $a+b \leq p-2$.   Then $c(r,s,p)=(c+d-p^k) \oplus c((a+b+1)p^k-s,(a+b+1)p^k-r,p)$
\item[Case 3]  else if $(r+s \leq p^{k+1}) \wedge (1 \leq c+d \leq p^k) \wedge (a>0) $: \label{Step3}

Let $c_1=\min\{c,d\}$, $d_1=\max\{c,d\}$, and define $u=c(c_1,d_1,p)\oplus (d_1-c_1)\oplus rev(c(c_1,d_1,p))$.
Note that if $c_1=0$, $u=(d_1)$.

Then
$c(r,s,p)=u \oplus c((a+b)p^k-s,(a+b)p^k-r,p)$.
\item[Case 4]  else if $(r+s \leq p^{k+1})\wedge (1 \leq c+d \leq p^k) \wedge (a=0) \wedge(d=0)$: \label{Step4}

In this case $r=c$ and $s=b p^k$.  Then  $c(r,s,p)=(c)$.
\item[Case 5] else if $(r+s \leq p^{k+1})\wedge (1 \leq c+d \leq p^k) \wedge (a=0) \wedge(d>0)$: \label{Step5}

In this case $r=c$.   Then
$c(r,s,p)=c(r,b p^k+d,p)=rev(c(r,b p^k-d,p))$.
\item[Case 6] else : \label{Step6} 

Here $(r+s \leq p^{k+1}) \wedge (c=0) \wedge (d=0)$.  So $r=a p^k$ with $a>0$ and $s=b p^k$.  Then
$c(r,s,p)=c(a p^k,b p^k,p)=(p^k) \oplus c((a-1)p^k,(b-1)p^k,p)$.
\end{enumerate}

\section{Proofs}\label{Proofs}

\begin{proof}[Proof of Theorem~\ref{Theorem1}]

Since we know $c(r,s,p)$ has a period of length $p^n$ and because the length of any smaller period would be a divisor of $p^n$, in order to prove that $p^n$ is the length of the smallest period, it suffices to produce $p^{n-1}+1$ distinct $m_1$'s.  

Suppose first that $p^{n-1}<r \leq (p-1)p^{n-1}$, which only happens when $p$ is an odd prime.   Then by Case~1 with $s \in [(p-1)p^{n-1},p^n]$, $c(r,s,p)=(r+s-p^n) \oplus c(p^n-s,p^n-r,p)$.  So $m_1$ ranges from $r-p^{n-1}$ to $r$ giving $p^{n-1}+1$ distinct values of $m_1$.

Consider $r$ such that $r>(p-1)p^{n-1}$.    Then if $r \leq s \leq p^n$, $c(r,s,p)=(r+s-p^n) \oplus c(p^n-s,p^n-r,p)$ by Case~1.  So $m_1$ ranges from $2 r -p^n$ to $r$, giving $r-(2 r-p^n-1)=p^n-r+1$ distinct values of $m_1$.

Now suppose that $s \in [(p-1)p^{n-1}+p^n,r-1+p^n]$.  Then, by Case~2 if $p$ is an odd prime and by Case~1 if $p=2$, $c(r,s,p)=(r+s-2 p^n)  \oplus c(2 p^n-s,2 p^n-r,p)$  Here $m_1$ ranges in value from $r+(p-1)p^{n-1}+p^n-2 p^n$ to $r+r-1+p^n-2 p^n$, that is, from $r-p^{n-1}$ to $2 r-p^n-1$.  We have exhibited values of $m_1$ ranging from $r-p^{n-1}$ to $r$, that is, $p^{n-1}+1$ distinct values of $m_1$.
\end{proof}

\begin{proof}[Proof of Theorem \ref{Theorem2}]
(1) We derive a contradiction by assuming that $r$ is the least integer satisfying $p^{n-1}<r\leq \frac{p-1}{2} p^{n-1}$ for which the result is false.

First we consider the case of $r=a p^{n-1}$ where $1<a \leq \frac{p-1}{2}$. Write $s \in [r,(p-a)p^{n-1}]$ as $b p^{n-1}+d$ where $0 \leq d <p^{n-1}$.    

Consider first the case of $d=0$. Then $r=a p^{n-1}$ and $s =b p^{n-1}$ where $a \leq b \leq p-a$.  By Case~6
\[c(r,s,p)=(p^{n-1})  \oplus c((a-1)p^{n-1},(b-1)p^{n-1},p).\]
By Case~6 again if $b< p-a$,
\[c(r,s+p^{n-1},p)=c(a p^{n-1},(b+1)p^{n-1},p)=(p^{n-1} )\oplus c((a-1)p^{n-1},b p^{n-1},p),\]
whereas if $b=p-a$,
\[c(r,s+p^{n-1},p)=(p^{n-1})\oplus c(p^n-(b+1)p^{n-1},p^n-a p^{n-1},p)=(p^{n-1})\oplus c((a-1)p^{n-1},b p^{n-1},p)\]
by Case~1.  If $a>2$, then $p^{n-1}<(a-1)p^{n-1}<r$ and by the minimality of $r$, $c((a-1)p^{n-1},b p^{n-1},p)=c((a-1)p^{n-1},b p^{n-1},p)$, while if $a=2$, $(a-1)p^{n-1}=p^{n-1}$ and $c((a-1)p^{n-1},b p^{n-1},p)=c((a-1)p^{n-1},b p^{n-1},p)$ by Theorem~\ref{GPX}.  Hence $c(r,s,p)=c(r,s+p^{n-1},p)$ in this case.

Assume $d>0$.  Then Case~3 applies to both $c(r,s,p)$ and $c(r,s+p^{n-1},p)$ with $u=(d)$.  Thus
\begin{align*}
c(r,s,p)&=(d)\oplus c((a+b)p^{n-1}-(b p^{n-1}+d),(a+b)p^{n-1}-a p^{n-1},p)\\
&=(d)\oplus c((a-1)p^{n-1}+p^{n-1}-d,b p^{n-1},p)
\end{align*}
and
\[c(r,s+p^{n-1},p)=(d) \oplus c((a-1)p^{n-1}+p^{n-1}-d,(b+1) p^{n-1},p).\]
If $a>1$, then $p^{n-1}<(a-1)p^{n-1}+p^{n-1}-d<r$ and $c((a-1)p^{n-1}+p^{n-1}-d,b p^{n-1},p)=c((a-1)p^{n-1}+p^{n-1}-d,(b +1)p^{n-1},p)$ by the minimality of $r$, while
if $a=0$, $p^{n-1}-d<p^{n-1}$, and $c(p^{n-1}-d,b p^{n-1},p)=c(p^{n-1}-d,(b +1)p^{n-1},p)$ by Theorem~\ref{GPX}.   Hence $c(r,s,p)=c(r,s+p^{n-1},p)$ in this case.

Now write $r=a p^{n-1}+c$ where where $1 \leq a <(p-1)/2$ and $0<c <p^{n-1}$. Write $s \in [r,(p-\lceil r/p^{n-1} \rceil )p^{n-1}]$ as $b p^{n-1}+d$ where $0 \leq d <p^{n-1}$.  Note that since $c>0$, $p-\lceil r/p^{n-1} \rceil =p-a-1$.  If $c+d>p^{n-1}$, then $d>0$, $b \leq p-a-2$, and Case~2 applies to the computation of $c(r,s,p)$ because $r+s \leq p^n$.  If $c+d \leq p^{n-1}$, then Case~3 applies to the computation of $c(r,s,p)$.

Let's assume that Case~2 applies. Then
\begin{align*}
c(r,s,p)&=c(a p^{n-1}+c,b p^{n-1}+d,p)\\
&=(c+d-p^{n-1}) \oplus c(a p^{n-1}+p^{n-1}-d,b p^{n-1}+p^{n-1}-c,p).
\end{align*}
If $b<p-a-2$, Case~2 also applies to the computation of $c(r,s+p^{n-1},p)$ to give
\begin{align*}
c(r,s+p^{n-1},p)&=c(a p^{n-1}+c,(b+1)p^{n-1}+d,p)\\
&=(c+d-p^{n-1})\oplus c(a p^{n-1}+p^{n-1}-d,(b+1) p^{n-1}+p^{n-1}-c,p).
\end{align*}
On the other hand if $b=p-a-2$, Case~1 applies to the computation of $c(r,s+p^{n-1},p)$ to give
\begin{align*}
c(r,s+p^{n-1},p)&=(r+s-p^n)\oplus c(p^n-((b+1)p^{n-1}-d),p^n-(a p^{n-1}+c),p)\\
&=(c+d-p^{n-1} )\oplus c(a p^{n-1}+p^{n-1}-d, (b+1)p^{n-1}+p^{n-1}-c,p)
\end{align*}
Since $p^{n-1}< a p^{n-1}+p^{n-1}-d<a p^{n-1}+c=r$,
\[c(a p^{n-1}+p^{n-1}-d,b p^{n-1}+p^{n-1}-c,p)=c(a p^{n-1}+p^{n-1}-d,(b +1)p^{n-1}+p^{n-1}-c,p)\]by the minimality of $r$.  Hence 
$c(r,s+p^{n-1},p)=c(r,s,p)$ in this case.

Let's assume that Case~3 applies to the computation of $c(r,s,p)$.  Hence $1 \leq c+d \leq p^{n-1}$ with $c>0$.  We still have $p-\lceil r/p^{n-1} \rceil =p-a-1$.  Then with $c_1=\min\{c,d\}$, $d_1=\max\{c,d\}$, and $u=c(c_1,d_1,p)\oplus (d_1-c_1)\oplus rev(c(c_1,d_1,p))$,
\begin{align*}
c(r,s,p)&=u\oplus c((a+b)p^{n-1}-s,(a+b)p^{n-1}-c,p)\\
&=u \oplus c((a-1)p^{n-1}+p^{n-1}-d,(b-1)p^{n-1}+p^{n-1}-c,p)
\end{align*}
If $d>0$, then $b\leq p-a-2$ and by Case~3 again,
\begin{align*}
c(r,s+p^{n-1},p)&=(u) \oplus c((a+b+1)p^{n-1}-(s+p^{n-1}),(a+b+1)p^{n-1}-r,r)\\
&=(u) \oplus c((a-1)p^{n-1}+p^{n-1}-d,b p^{n-1}+p^{n-1}-c,p).
\end{align*}
If $d=0$, then $b \leq p-a-1$.  If $b\leq p-a-2$, the Case~3 applies again to give $c((r,s+p^{n-1},p)=(u) \oplus c((a-1)p^{n-1}+p^{n-1}-d,b p^{n-1}+p^{n-1}-c,p)$. If $d=0$ and $b=p-a-1$, then $u=(c)$ and Case~1 applies to give
\begin{align*}
c(r,s+p^{n-1},p)&=(c)\oplus c(p^n-(s+p^{n-1}),p^n-r,p)\\
&=(c)\oplus c(p^n-(p-a)p^{n-1},p^n-(a p^n+c),p)\\
&=u \oplus c(a p^{n-1},b p^{n-1}+p^{n-1}-c,p)\\
&=u \oplus c((a-1)p^{n-1}+p^{n-1}-d,b p^{n-1}+p^{n-1}-c,p)
\end{align*}

If $a>1$, then $p^{n-1} <a p^{n-1}+p^{n-1}-d<r$ and therefore $c((a-1)p^{n-1}+p^{n-1}-d,(b-1)p^{n-1}+p^{n-1}-c,p)=c((a-1)p^{n-1}+p^{n-1}-d,b p^{n-1}+p^{n-1}-c,p)$by the minimality of $r$.  If $a=0$, $p^{n-1}-d \leq p^{n-1}$, and $c(p^{n-1}-d,(b-1) p^{n-1}+p^{n-1}-c,p)=c(p^{n-1}-d,b p^{n-1}+p^{n-1}-c,p)$ by Theorem~\ref{GPX}.  Hence 
$c(r,s+p^{n-1},p)=c(r,s,p)$ in this case.

(2) Now we show that $c(r,(p-\lceil r/p^{n-1} \rceil)p^{n-1}+1,p) \neq c(r,(p-\lceil r/p^{n-1} \rceil+1)p^{n-1}+1,p)$.  First consider the case of $r=a p^{n-1}+c$ where $0<c<p^{n-1}$.  Then $(p-\lceil r/p^{n-1} \rceil)p^{n-1}+1=(p-a-1)p^{n-1}+1$.  Case~3 applies here because $c+1 \leq p^{n-1}$.  Since $\lambda(1,c,p)=(c)$, $c(1,c,p)=(1)$and $u=(1,c-1,1)$.  Thus the first component of $c(r,(p-\lceil r/p^{n-1} \rceil)p^{n-1}+1,p)$ is $1$.  Case~1 applies in the computation of $c(r,(p-\lceil r/p^{n-1} \rceil+1)p^{n-1}+1,p)=c(a p^{n-1}+c,(p-a)p^{n-1}+1,p)$ to give a first component of $c+1$.  Now consider the case of $r=a p^{n-1}$ where $2 \leq a \leq (p-1)/2$.  Case~1 applies in the computation of $c(a p^{n-1},(p-\lceil r/p^{n-1} \rceil)p^{n-1}+1,p) =c(a p^{n-1},(p-a)p^{n-1}+1,p)$ to give a first component of $1$, while Case~1 also applies in the computation of $c(r,(p-\lceil r/p^{n-1} \rceil+1)p^{n-1}+1,p)=c(a p^{n-1}, (p-a+1) p^{n-1}+1,p)$ to give a first component of $p^{n-1}+1$.
\end{proof}

\begin{proof}[Proof of Theorem~\ref{Theorem3}]
Assume that $0 \leq d < p^{n-1}$ and that $r \leq b p^{n-1}+d, b p^{n-1}+p^{n-1}-d \leq (p-\lceil r/p^{n-1} \rceil )p^{n-1}$.  We will assume that $d>0$ since the result follows from Theorem~\ref{Theorem2} when $d=0$.  We will prove the following three equalities:
\begin{align*}
c(r,b p^{n-1}+(p^{n-1}-d),p)&=c(r,p^n+b p^{n-1}+(p^{n-1}-d),p)\\
&=rev(c(r,p^n-(b p^{n-1}+p^{n-1}-d),p))\\
&=rev(c(a p^{n-1},b p^{n-1}+d,p)).
\end{align*}

The first equality, $c(r,b p^{n-1}+(p^{n-1}-d),p)=c(r,p^n+b p^{n-1}+(p^{n-1}-d),p)$, follows by  Theorem~\ref{GPX}.  Now write $r=a p^{n-1}+c$ where $0 \leq c < p^{n-1}$.  In order to apply Case~5 we need $c +b p^{n-1}+p^{n-1}-d \leq p^n$.  If $c=0$, 
\[c +b p^{n-1}+p^{n-1}-d=b p^{n-1}+p^{n-1}-d\leq (p-\lceil r/p^{n-1} \rceil )p^{n-1}=(p-a)p^{n-1}<p^n.\] 
On the other hand if $c>0$, 
\[c +b p^{n-1}+p^{n-1}-d \leq c+(p-\lceil r/p^{n-1} \rceil )p^{n-1} =c+(p-a-1)p^{n-1}<(p-a)p^{n-1}<p^n.
\]
By Case~5, $c(r,p^n+b p^{n-1}+(p^{n-1}-d),p)=rev(c(r,p^n-(b p^{n-1}+p^{n-1}-d),p))$.  

Our final step is to apply Theorem~\ref{Theorem2} to get $c(r,(p-b-1)p^{n-1}+d,p)=c(r,b p^{n-1}+d,p)$,  but in order to do this, we must verify that
\[r \leq (p-b-1)p^{n-1}+d \leq (p-\lceil r/p^{n-1} \rceil +1)p^{n-1}.
\]
The presence of the term $p-\lceil r/p^{n-1} \rceil +1$ rather that $p-\lceil r/p^{n-1} \rceil$ in the last displayed equation is justified by the fact that in Theorem~\ref{Theorem2} along as $r \leq s \leq (p-\lceil r/p^{n-1} \rceil)p^{n-1}$, $c(r,s,p)=c(r,s+p^{n-1},p)$.

If $r =a p^{n-1}+c$ with $c>0$, then $p-\lceil r/p^{n-1} \rceil=p-a-1$.  Because $b p^{n-1}+d \leq (p-a-1) p^{n-1}$ and $d>0$, $b \leq p-a-2$, so $p-b-1 \geq a+1$ and $r<(p-b-1)p^{n-1}+d $.  Since $a \leq b$, 
\[(p-b-1)p^{n-1}+d \leq (p-a-1)p^{n-1}+d <(p-a)p^{n-1}=(p-\lceil r/p^{n-1} \rceil +1)p^{n-1}.
\]

If $r=a p^{n-1}$ with $2 \leq a \leq(p-1)/2$, then $p-\lceil r/p^{n-1} \rceil=p-a$. Because $b p^{n-1}+d \leq (p-a)p^{n-1}$ and $d>0$, then $b \leq p-a-1$, so $p-b-1 \geq a$ implying $r<(p-b-1)p^{n-1}+d$.  Since $b \geq a$,
\[(p-b-1)p^{n-1}+d <(p-a)p^{n-1}=(p-\lceil r/p^{n-1} \rceil)p^{n-1}<(p-\lceil r/p^{n-1} \rceil +1)p^{n-1}.\]
By Theorem~\ref{Theorem2}, $c(a p^{n-1},(p-b-1)p^{n-1}+d,p)=c(a p^{n-1},b p^{n-1}+d,p)$.  These steps together prove that $c(r,b p^{n-1}+(p^{n-1}-d),p)=rev(c(r,b p^{n-1}+d,p))$.
\end{proof}

\section{Two Concluding Observations}

(1) One can ask what happens when $p^{n-1}<r \leq \frac{p-1}{2}$ and $(p-\lceil r/p^{n-1} \rceil)p^{n-1}<s<p^n$.  We claim that  $c(r,s,p)=rev(c(r,2 p^n-s,p))$.  Since $r+(p^n-s)<p^n$, $c(r,p^n+p^n-s,p)=rev(c(r,p^n-(p^n-s),p))=rev(c(r,s,p))$ by Case~5. So what happens is a reflection around  $s=p^n$.

(2) By Theorem~\ref{GPX}, $c(r,s+p^n,p)=c(r,s,p)$, in particular $c(r,r+p^n,p)=c(r,r,p)$.  But what can we say about $c(r,r_1+p^n,p)$ where $r_1<r$ but close to $r$?  We give a two-part answer here whose proof is very similar to the proof of Theorem~\ref{Theorem3}.
\begin{enumerate}
\item[(i)] If $p^{n-1}<r =ap^{n-1}+c \leq \frac{p-1}{2} p^{n-1}$ where $0<c<p^{n-1}$, then $c(r,a p^{n-1}+c_1+p^n,p)=c(r,(a+1)p^{n-1}+c_1,p)$ for every $c_1 \in [0,c)$.
\item[(ii)] If $r=a p^{n-1}$ where $2 \leq a \leq \frac{p-1}{2}$, then $c(a p^{n-1},(a-1)p^{n-1}+d+p^n,p)=c(a p^{n-1},a p^{n-1}+d,p)$ for every $d\in [0,p^{n-1})$.
\end{enumerate}
So the subperiod kicks in a little earlier when we add $p^n$ to $s$.

\end{document}